\newcommand  {\bb}[1]{\mathbb{#1}}
\renewcommand{\geq}{\geqslant}
\renewcommand{\leq}{\leqslant}
\newcommand  {\e}{\varepsilon}
\renewcommand{\phi}{\varphi}
\newcommand  {\tld}[1]{\widetilde{#1}}
\newcommand  {\E}{\bb{E}}
\renewcommand{\P}{\bb{P}}
\newcommand  {\hs}{\hspace{2mm}}
\newcommand  {\hsl}{\hspace{1mm}}
\newcommand  {\1}{\mathbbm{1}}
\newtheorem{thm}{Theorem}
\newtheorem{lem}[thm]{Lemma}
\newtheorem{prop}[thm]{Proposition}
\begin{document}

\title{Increasing paths in regular trees}

\author{
 Matthew I. Roberts
  \footnote{Department of Mathematical Sciences, University of Bath, Bath, BA2 7AY, UK. \newline Email: \texttt {mattiroberts@gmail.com}.}
 \and 
 Lee Zhuo Zhao
  \footnote{Statistical Laboratory, University of Cambridge, Cambridge, CB3 0WB, UK. \newline Email: \texttt {lzz20@statslab.cam.ac.uk}.}
}

\date{\today}

\maketitle

\begin{abstract}
We consider a regular $n$-ary tree of height $h$, for which every vertex except the root is labelled with an independent and identically distributed continuous random variable. Taking motivation from a question in evolutionary biology, we consider the number of paths from the root to a leaf along vertices with increasing labels. We show that if $\alpha = n/h$ is fixed and $\alpha > 1/e$, the probability that there exists such a path converges to $1$ as $h \to \infty$. This complements a previously known result that the probability converges to $0$ if $\alpha \leq 1/e$.
\end{abstract}


\section{Introduction}
Consider a regular $n$-ary tree of height $h$, where $n = \lfloor\alpha h\rfloor$. To each vertex except the root attach an independent and identically distributed continuous random variable. We ask whether there is a path from the root to a leaf whose labels only increase. Nowak and Krug \cite{NK} called this \emph{accessibility percolation} and showed that $\P(\text{there exists an increasing path}) \to 0$ as $n \to \infty$ if $\alpha \leq 1/e$, whereas if $\alpha > 1$ then there exists some $p > 0$ depending on $\alpha$ such that $\P(\text{there exists an increasing path}) > p$. We give a complete characterisation in terms of $\alpha$, showing that there is a phase transition at $\alpha = 1/e$.

\begin{thm}\label{T:mainthm}
Suppose that $n = \lfloor \alpha h\rfloor$. As $h \to \infty$,
\[
 \P(\text{there exists an increasing path}) \to
 \begin{cases}
 0 & \text{ if } \alpha\leq 1/e, \\
 1 &\text{ if } \alpha> 1/e.
 \end{cases}
\]
\end{thm}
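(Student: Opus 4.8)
The bound $\P \to 0$ for $\alpha \le 1/e$ is the first-moment estimate of Nowak and Krug: there are $n^h$ root-to-leaf paths, each increasing with probability $1/h!$, so the expected number of increasing paths is $n^h/h! \le (en/h)^h/\sqrt{2\pi h} \le (e\alpha)^h/\sqrt{2\pi h}$ by Stirling's inequality, and this tends to $0$ whenever $e\alpha \le 1$; Markov's inequality gives the claim. The rest of the work is the genuinely new direction: $\alpha > 1/e \implies \P \to 1$.

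Take the labels to be i.i.d.\ Uniform$[0,1]$. For a vertex $v$ with label $y$ at the root of a complete $n$-ary subtree of height $L$, let $q_L(y)$ be the probability that there is a path from $v$ to the bottom of that subtree with increasing labels (so all of its labels exceed $y$), and put $r_L(y) = \int_y^1 q_L(z)\,dz$. Conditioning on the labels of the children of $v$ gives $q_L(y) = 1-(1-r_{L-1}(y))^n$ and hence the closed recursion
\[
 r_L(y) \;=\; \int_y^1 \Big[\,1-\big(1-r_{L-1}(w)\big)^n\,\Big]\,dw, \qquad r_0(y) = 1-y .
\]
One checks that $\P(\exists\ \text{increasing path}) = 1 - (1-r_{h-1}(0))^n$ and that $r_{h-1}(0)$ is exactly the probability that a given child of the root begins an increasing path to a leaf; since the $n$ children furnish i.i.d.\ trials, it suffices to prove $n\,r_{h-1}(0) \to \infty$, and I would in fact aim for the stronger statement that $r_{h-1}(0)$ is bounded below by a positive constant $c(\alpha)$.

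The content is thus to control how fast $r_L(0)$ decreases in $L$ --- equivalently, how fast the ``front'' of the monotone profile $y \mapsto q_L(y)$ (which is near $1$ for small $y$ and near $0$ for large $y$) moves to the left. The recursion is monotone, so $r_L$ is nonincreasing in $L$ and any sub-solution $(\phi_L)$ --- a sequence with $\phi_0 \le r_0$ and $\phi_L(y) \le \int_y^1[1-(1-\phi_{L-1}(w))^n]\,dw$ for all $y$ --- yields $\phi_L \le r_L$. Where $r$ is small the recursion linearises to $q_L \approx n\,r_{L-1}$, under which the exponential modes $e^{-\mu y}$ are multiplied by $n/\mu$ per step; this is precisely the setting of a pulled (Fisher--KPP type) travelling front, whose selected speed is $\min_{\mu>0}\mu^{-1}\log(\mu/n) = 1/(en)$, attained at $\mu = en$. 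I would accordingly build an explicit sub-solution of the shape ``a ramp of slope $1$ below a point $a_L$, glued to the exponential tail $\tfrac1\mu e^{-\mu(y-a_L)_+}$ with $\mu \approx en$, and truncated a little past the front'', taking $a_L = 1 - (1+\varepsilon)L/(en)$ for a small fixed $\varepsilon>0$. Verifying the sub-solution inequality then gives $r_{h-1}(0) \ge \phi_{h-1}(0) \ge a_{h-1} = 1-(1+\varepsilon)(h-1)/(en) \to 1 - (1+\varepsilon)/(e\alpha)$, a positive constant once $\alpha > 1/e$ and $\varepsilon$ is chosen with $(1+\varepsilon)/(e\alpha) < 1$. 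Hence $n\,r_{h-1}(0)\to\infty$ and $\P\to1$.

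The hard step is the verification of the sub-solution inequality with the \emph{sharp} speed $1/(en)$: when $\alpha$ is only slightly larger than $1/e$ there is essentially no slack, so one cannot afford the factor of $e$ lost by the naive choice of a pure ramp, which propagates at speed $\approx 1/n$ and so only yields the result for $\alpha > 1$ (recovering the other known bound). Pinning down the exponential rate forces careful treatment of the leading edge of the front --- in particular the logarithmic corrections governed by the identity $\int_0^1 \frac{1-(1-u)^n}{u}\,du = \sum_{k=1}^n\frac1k$, the join between the ramp and the tail, and the truncation region near $y = 1$ where $q_L$ must fall off rapidly for lack of room. The remaining ingredients --- the first-moment bound, the reduction to $r_{h-1}(0)$, monotonicity of the recursion, and the passage from ``$r_{h-1}(0)$ bounded below'' to ``$\P\to1$'' --- are routine.
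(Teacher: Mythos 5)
Your route is genuinely different from the paper's. The paper restricts attention to increasing paths whose labels stay above the linear envelope $D_\e$, computes the first moment from below (via an exact integral identity) and the second moment from above (via a decomposition over $k$-forks), applies Paley--Zygmund to get $\P(N_\e>e^{\delta h})\gtrsim h^{-3}$, and then boosts this polynomial bound to $1-e^{-\eta h}$ by running the argument independently below each of the $\sim n^4$ ``good'' subpaths of the first four levels. You instead write the survival probability as a closed scalar recursion $r_L(y)=\int_y^1[1-(1-r_{L-1}(w))^n]\,dw$ and treat it as a discrete pulled front of FKPP type, reducing the theorem to the assertion that the front cannot reach $y=0$ in $h$ steps when $\alpha>1/e$. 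Both approaches are organised around the same number: the paper's envelope $D_\e$ encodes that a typical increasing path has $X(u_j)\approx j/h$, while your ramp $a_L-y$ is the travelling-wave profile in the bulk; and your linear-spreading speed $1/(en)$ is exactly the $\alpha e=1$ threshold from the first moment. What the paper's method buys is an essentially computation-free, fully explicit argument and, as a byproduct, exponentially many paths with exponentially high probability; what your method would buy, if completed, is a cleaner conceptual picture (a deterministic recursion, monotonicity, sub/super-solutions) that plausibly pins down the front position much more precisely than $N_\e$ does — this is the natural framework in which to attack Theorem~2 and to pursue Brunet--Derrida--type corrections.

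That said, as written the proposal has a genuine gap exactly where you flag one: the sub-solution inequality is asserted, not proved, and it is the entire difficulty. The specific obstructions to be overcome: (i) the ramp part can exceed $1$ (already $\phi_0(0)>1$ with your normalisation), which breaks the monotone-coupling step since $u\mapsto 1-(1-u)^n$ is only monotone on $[0,1]$, so the profile needs to be capped and the cap checked; (ii) in the tail one has $1-(1-\phi)^n\leq n\phi-\binom n2\phi^2(1-\phi)^{n-2}$, and since the tail amplitude $1/\mu=1/(en)$ is of order $1/n$, the quadratic correction is \emph{not} negligible relative to $n\phi$ at the join — this is precisely the cutoff region between $r\sim 1/n$ and $r\sim (\log n)/n$ where neither the ramp nor the pure exponential is accurate, and the glueing must be done with a genuine inequality, not an approximation; (iii) the truncation point $b_L$ near $y=1$ forces an extra loss per step that must be summed. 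None of these is fatal — the $(1+\e)$ speed slack is $O(1)$ whereas the cutoff and truncation corrections should be $o(1)$ per unit length — but the proposal does not verify any of them, and the paper's proof shows there is a shorter road to Theorem~1. Also a small slip: the linear-spreading speed is $\max_{\mu>0}\mu^{-1}\log(\mu/n)$, not $\min$ (the function vanishes as $\mu\downarrow n$ and as $\mu\to\infty$ and is maximised at $\mu=en$), though the conclusion $\mu^*=en$, $v^*=1/(en)$ is of course correct.
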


Given the result of \cite{NK} mentioned above, it suffices to prove the second statement. In fact we will show that for any $\alpha > 1/e$, there exist $\delta > 0$ and $\eta > 0$ such that
\[
 \P(\text{there exist at least } \exp(\delta h) \text{ increasing paths}) \geq 1 - \exp(-\eta h).
\]
This suggests that we might be able to discover more around the critical point $1/e$, and indeed by essentially the same methods we are able to obtain the following finer result.

\begin{thm}\label{T:ext}
Suppose that $n = \left( \frac{1 + \beta_h}{e} \right) h$, where $\beta_h \to 0$ as $h\to\infty$. Then as $h \to \infty$,
\[
 \P(\hbox{there exists an increasing path}) \to
 \begin{cases} 0 &\hbox{ if } \log h - 2h \beta_h \to \infty, \\
 1 &\hbox{ if } h \beta_h / \log h \to \infty.
 \end{cases}
\]
\end{thm}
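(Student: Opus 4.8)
The two halves of the statement need different tools: the vanishing case is a first-moment bound, and the convergence-to-$1$ case comes from re-running the restricted second-moment argument behind Theorem~\ref{T:mainthm} while tracking the polynomial corrections.

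For the first half, assume $\log h - 2h\beta_h \to \infty$ and let $N$ be the number of increasing root-to-leaf paths. The $h$ labels along a fixed such path are i.i.d.\ and continuous, hence increasing with probability $1/h!$, so $\E[N] = n^h/h!$. By Stirling's formula $\E[N] = (1+o(1))(1+\beta_h)^h/\sqrt{2\pi h}$, and since $\log(1+x)\le x$ for $x>-1$,
\[
 \log\E[N] \;\le\; h\beta_h - \tfrac12\log(2\pi h) + o(1) \;=\; \tfrac12\bigl(2h\beta_h - \log h\bigr) + O(1) \;\longrightarrow\; -\infty ,
\]
so $\P(\text{there exists an increasing path}) \le \E[N] \to 0$.

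For the second half, assume $h\beta_h/\log h \to \infty$. Since the event ``there exists an increasing path'' is monotone in the tree (adding children only helps), we may shrink $n$ and thereby assume in addition that $\beta_h = O((\log h)^2/h)$, so that $h\log(1+\beta_h) = h\beta_h + o(1)$. I would then adapt the proof of Theorem~\ref{T:mainthm}. Restrict to ``good'' increasing paths, those whose level-$j$ label additionally satisfies $U_{v_j}\ge b_j$ for a barrier $b_j$ of order $j/h$ chosen exactly as in that proof, and let $N^{\mathrm g}$ count them. Two facts are needed. (i) The barrier costs only a power of $h$: $\E[N^{\mathrm g}] = (n^h/h!)\cdot\P(\text{uniform order statistics stay above the barrier}) = (1+\beta_h)^h\, h^{-O(1)}$, where the first factor comes from Stirling (this is exactly why the threshold is $1/e$: $n^h/h! = (1+\beta_h)^h/\sqrt{2\pi h}$ already diverges once $\alpha>1/e$) and the second is a standard order-statistics/ballot estimate that is only polynomially small. (ii) The second moment is comparable: decomposing $\E[(N^{\mathrm g})^2]$ over ordered pairs of good paths according to the level $k$ at which they split, the barrier forces the label at the splitting vertex to be of order $k/h$ rather than the atypically small value $\approx k/(2h-k)$ that makes the unrestricted second moment diverge exponentially, so pairs with a long common prefix contribute strictly less than the essentially independent pairs that split near the root, giving $\E[(N^{\mathrm g})^2] \le C\,\E[N^{\mathrm g}]^2$. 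Granting (i) and (ii), Paley--Zygmund yields $\P(N^{\mathrm g}\ge 1)\ge 1/C$ inside each of the $n$ subtrees hanging from the root; these subtrees are independent and share no labels, so $\P(\text{no increasing path}) \le (1-1/C)^n \to 0$ because $n\to\infty$. The hypothesis enters only through (i): $\E[N^{\mathrm g}] = (1+\beta_h)^h h^{-O(1)} \to\infty$ precisely when $h\beta_h/\log h\to\infty$.

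The main obstacle is the choice of barrier and the near-critical Laplace asymptotics it requires: $b_j$ must be low enough that the good first moment loses at most a polynomial factor (which is affordable, since $e^{h\beta_h}$ beats every power of $h$) yet high enough that the conditional second moment is dominated by its diagonal ($k=0$) term, and pinning down a barrier meeting both demands amounts to a delicate evaluation of the sum over admissible label profiles. Secondary points are closing the band off near level $h$ (where the diagonal reaches $1$ and the barrier can no longer be imposed) so that genuine leaves are reached, and verifying that the first-moment loss in (i) is truly only polynomial --- that polynomial loss, inherited from the proof of Theorem~\ref{T:mainthm}, is exactly what leaves the gap between the two regimes in the statement, whose upper side is just the exact first-moment threshold.
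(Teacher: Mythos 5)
Your treatment of the vanishing regime is correct and identical to the paper's: write $\E[N]=n^h/h!\sim(1+\beta_h)^h/\sqrt{2\pi h}$, use $\log(1+\beta_h)\le\beta_h$, and observe that $\log h-2h\beta_h\to\infty$ makes $\E[N]\to 0$.

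For the convergence-to-$1$ regime there is a genuine gap at your step (ii). You assert $\E[(N^{\mathrm g})^2]\le C\,\E[N^{\mathrm g}]^2$ with a constant $C$, but the paper does not obtain this and the natural barrier $D_{\e_h}$ provably does not give it: the first-moment loss from the ballot constraint is a factor of order $1/h$ (Proposition~\ref{P:1stmom} gives $\E[N_{\e_h}]\ge(\alpha_h(1-\e_h)e)^h/3h^{3/2}$, so $\E[N_{\e_h}]^2\asymp(\alpha_h(1-\e_h)e)^{2h}/h^3$), while the fork sum in Proposition~\ref{P:2ndmom} still contributes $\gtrsim(\alpha_h(1-\e_h)e)^{2h}$ up to sub-polynomial factors. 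The paper ends up with $\E[N_{\e_h}^2]\le c'h^{15/4}\E[N_{\e_h}]^2$, a ratio that grows polynomially, not a constant. This is not a cosmetic point, because your subsequent boosting step breaks down: Paley--Zygmund then gives only $\P(N^{\mathrm g}\ge 1)\gtrsim h^{-p}$ with $p=15/4>1$ in each of the $n\asymp h$ root subtrees, and $(1-ch^{-p})^n=\exp(-\Theta(h^{1-p}))\to 1$, not $0$. Replacing the constant $C$ by any $h^p$ with $p>1$ kills the one-level boost.

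The paper's actual route differs precisely here: it accepts the polynomial ratio $h^{15/4}$ and compensates by boosting over $\lfloor\log h\rfloor$ levels rather than one, defining $\tld M_j$ as the set of subpaths up to level $j$ whose labels lie in prescribed windows of width $\e_h/\log h$, and using Chernoff repeatedly to get $\#\tld M_{\log h}\ge(n\e_h/2\log h)^{\log h}=h^{\log(n\e_h/2\log h)}$ with high probability. This super-polynomial number of independent good seeds is what overwhelms $h^{15/4}$, and requiring $(n\e_h/2\log h)^j$ to grow is exactly where the hypothesis enters: one needs $\e_h$ with $\e_h/\beta_h\to 0$ (so the barrier stays subcritical) and $h\e_h/\log h\to\infty$ (so the Chernoff bounds are summable), which is possible precisely when $h\beta_h/\log h\to\infty$. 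You flag the barrier tuning as ``the main obstacle'' but leave it unresolved; in the paper it is resolved not by sharpening the barrier to get a constant second-moment ratio, but by switching to a $\log h$-level boost. Your monotonicity reduction to $\beta_h=O((\log h)^2/h)$ is a nice observation but does not touch this difficulty.
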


\subsection{Biological motivation}
Consider the following simplified model of evolution in a population. Each genetic type, or \emph{genotype}, in the population has an associated fitness. A particular genotype may give rise to multiple new genotypes through mutations, which either replace the original wild genotype or disappear from the population. For a haploid asexual population, the dynamics of evolution are governed by the population size $N$, the selection coefficient $s$ and the mutation rate $\mu$ \cite{FKVK}. We make the following two assumptions on these three parameters:

\begin{enumerate}
 \item $Ns \gg 1$. By a classical formula of Kimura \cite{Kim}, only mutations which give rise to a fitter genotype can replace the wild genotype and survive.
 \item $\mu$ is sufficiently small such that mutations arise and either replace the wild genotype or become extinct one at a time. Therefore, there can be at most two genotypes in the population at any given time, of which one is a direct mutant of the other.
\end{enumerate}

Together, these two assumptions form what is known in the evolutionary biology literature as the \emph{strong selection weak mutation} (SSWM) regime \cite{G, O}. Under such a setting, the only possible evolutionary paths of genotypes are ones with increasing fitness. In the evolutionary biology literature, these increasing paths are known as \emph{selectively accessible} \cite{FKVK, WDDH, WWC}.

To analyse the number of such paths, we also require the relationship between genotype and fitness. For this, we use the \emph{House of Cards} model \cite{Kin, KL}, in which every genotype has an independent and identically continuously distributed fitness. Since we only care about whether the fitnesses along a path are in increasing order, as long as the random variables are continuous, the precise distribution is not important.

The space of genotypes together with their fitnesses form a labelled graph. If we further assume that the population initally consists of one single genotype, and that separate mutations never give rise to the same genotype, then the space of genotypes becomes a rooted tree. A selectively accessible or increasing path is then a simple path from the root to a leaf along vertices with increasing labels. For the House of Cards model in the SSWM regime, we may assume that the root has the genotype of minimal fitness. This leads us precisely to the accessibility percolation model outlined above.

\subsection{Other models}
Our methods could be extended to consider, for example, Galton-Watson trees instead of $n$-ary trees. 

Besides trees it is also natural to consider the House of Cards model on the $n$-dimensional hypercube $\{0,1\}^n$, for which there has been recent progress \cite{BBS, HM}. A selectively accessible path in this setting is a path of minimal length on increasing labels from $(0, \ldots, 0)$ to $(1, \ldots, 1)$. Both papers consider the effect of varying the fitness at the zero vertex on the number of accessible paths. Hegarty and Martinsson \cite{HM} obtain the threshold for the phase transition of the existence of increasing paths as $n \to \infty$. Berestycki, Brunet and Shi \cite{BBS} show that around this threshold, the number of such paths converges in distribution to the product of two independent exponential variables. As a first step, they obtain results for a particular rooted tree related to the hypercube.

Hegarty and Martinsson \cite{HM} also consider another model for the relationship between genotype and fitness, known as the \emph{Rough Mount Fuji} model in the evolutionary biology literature \cite{AUINKH}, where a linear drift, depending on the distance to the root, is introduced to the random fitnesses. This model on $n$-ary trees was also considered in \cite{NK}.

\subsection{Notation}
Throughout, we assume without loss of generality that the distribution of the labels is $U[0,1]$, and use the following crude double bound for Stirling's approximation valid for all $n \geq 1$:
\[
 2 < \frac{n!}{\sqrt{n} (n/e)^n} < 3.
\]
We also assume that $n = \alpha h$, rather than use unwieldy $\lfloor \cdot\rfloor$ notation all the way through the article. Since there is a clear monotonicity in $\alpha$ in the model, no extra difficulty arises in considering cases when $\alpha h$ is not an integer.

Let $P$ be the set of simple (that is, non-backtracking) paths from the root to a leaf in the tree; then $\#P = n^h$. For a path $u \in P$, write $X(u) = (X(u_1), \ldots, X(u_h))$ for the (i.i.d., $U[0,1]$) labels on its vertices. For any two paths $u,v \in P$, let $a(u,v) = \max\{k : u_k = v_k\}$. Clearly $X(u_j) = X(v_j)$ for all $j \leq a(u,v)$.

Define
\[
 I = \left\{ (x_1, \ldots, x_h) \in [0,1]^h : x_1 < x_2 < \ldots <x_h \right\},
\]
and for $\e \in [0,1)$,
\[
 C_\e = \left\{ (x_1, \ldots, x_h) \in [0,1]^h : x_j\geq \e \ \forall j \right\}
\]
and
\[
 D_\e = \left\{ (x_1, \ldots, x_h) \in [0,1]^h : x_j \geq \e + (1 - \e) \left( \frac{j-1}{h} \right) \ \forall j \right\}.
\]
\[
\begin{tikzpicture}
 \draw [lightgray, fill] (0,2.05) -- (3.5,2.05) -- (3.5,2) -- (0,0.4) -- (0,2.05);
 \node at (1.8,1.6) {$D_\e$};
 \draw [<->] (0,2.5) -- (0,0) -- (4,0) node [right] {$j$};
 \node [left] at (0,2.5) {$v_j$};
 \draw [dashed] (3.5,2.05) -- (3.5,0) node [below] {$h$};
 \draw [dashed] (3.5,2.05) -- (0,2.05) node [left] {$1$};
 \draw [dashed] (3.5,0.4) -- (0,0.4) node [left] {$\e$};
 \draw [-] (3.5,2) -- (0,0.4);
\end{tikzpicture}
\]
Define
\[N_\e = \sum_{u\in P}\1_{\{X(u)\in I\cap D_\e\}},\]
and
\[N = \sum_{u\in P}\1_{\{X(u)\in I\}}.\]

\subsection{Outline of proof}\label{S:outline}
We will concentrate for the most part on proving Theorem 1, and then show how to adapt our proof to obtain Theorem 2.

We first observe that for a path $u$, $\{ X(u) \in I \}$ is the event that $h$ i.i.d.\ labels are in increasing order, which has probability $\frac1{h!}$. As $\#P = n^h$,
\[\E[N] = \frac{n^h}{h!}.\]
Using Stirling's approximation we have that
\begin{equation}\label{E:stirl}
\E[N] \asymp \frac{n^he^h}{\sqrt{h}h^h} = \frac{(\alpha e)^h}{\sqrt{h}}.
\end{equation}
In particular, we see that for $\alpha \leq 1/e$, $\E[N] \to 0$ as $h \to \infty$, and recover the $\alpha\leq 1/e$ part of Theorem \ref{T:mainthm} via Markov's inequality.

Nowak and Krug \cite{NK} gave this argument, and then went on to give an upper bound on $\E[N^2]$, which they used to get a lower bound on the probability that $N\geq 1$. We take a similar but slightly more subtle route, in that we will work for the most part with $N_\e$, whose moments are slightly harder to estimate but give us more information. Of course we have
\[\E[N_\e] \leq \E[N] = \frac{n^h}{h!}\asymp \frac{(\alpha e)^h}{\sqrt{h}}.\]
In Section \ref{S:1stmom} we will show that
\[\E[N_\e] \geq \frac{(\alpha(1-\e)e)^h}{3h^{3/2}}\]
and in Section \ref{S:2ndmom} we will see that when $\alpha(1-\e)e>1$ and $h$ is large,
\[\E[N_\e^2] \leq \E[N_\e] + \E[N_\e]^2 + c(\alpha(1-\e)e)^{2h}.\]
This will be enough to tell us that the probability that there is at least one path in $N_\e$ is at least a constant times $h^{-3}$ when $h$ is large.

We then do a fairly standard trick to complete the proof of Theorem \ref{T:mainthm} in Section \ref{S:mainthm}. We will show that there are many more than $h^3$ ``good'' subpaths in the first few levels of the tree: these are subpaths whose labels are increasing and small on the first few levels. Each of these subpaths then has a constant times $h^{-3}$ probability of being the start of an increasing path to a leaf.

Finally in Section \ref{S:ext} we show how our techniques can be fine-tuned to give Theorem \ref{T:ext}.

\section{First moment bound}\label{S:1stmom}
We aim to prove our lower bound on the first moment of $N_\e$:
\begin{prop}\label{P:1stmom}
\[\E[N_\e] \geq \frac{(\alpha(1-\e)e)^h}{3h^{3/2}}.\]
\end{prop}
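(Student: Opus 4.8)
The plan is to compute $\E[N_\e]$ exactly as a sum over paths and then bound the probability that a single path's labels lie in $I \cap D_\e$. Since $\#P = n^h = (\alpha h)^h$ and all paths have the same distribution for their label vector $X(u)$, we have
\[
 \E[N_\e] = n^h \, \P(X(u) \in I \cap D_\e) = (\alpha h)^h \, \P(U_1 < U_2 < \cdots < U_h, \ U_j \geq \e + (1-\e)\tfrac{j-1}{h} \ \forall j),
\]
where $U_1, \ldots, U_h$ are i.i.d.\ $U[0,1]$. So everything reduces to a lower bound on this single probability, which is the volume of the region $I \cap D_\e \subseteq [0,1]^h$.

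The key step is to lower-bound $\mathrm{Vol}(I \cap D_\e)$. The idea is to restrict the $j$-th coordinate to a short interval just above its lower constraint: set $y_j = \e + (1-\e)\frac{j-1}{h}$ (the floor of the $D_\e$ constraint) and force $x_j \in [y_j, y_j + \frac{1-\e}{h})$ for each $j$. Because $y_{j+1} = y_j + \frac{1-\e}{h}$, consecutive such intervals are adjacent, so any choice $x_j \in [y_j, y_{j+1})$ automatically satisfies $x_j < x_{j+1}$ — hence this product of intervals lies entirely inside $I \cap D_\e$. Actually we must be slightly more careful: $x_j < x_{j+1}$ needs $x_j < y_{j+1} \le x_{j+1}$, which holds since $x_j < y_j + \frac{1-\e}{h} = y_{j+1} \le x_{j+1}$. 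This gives
\[
 \mathrm{Vol}(I \cap D_\e) \geq \prod_{j=1}^{h} \frac{1-\e}{h} = \left(\frac{1-\e}{h}\right)^h,
\]
provided the top interval fits in $[0,1]$, i.e.\ $y_h + \frac{1-\e}{h} = \e + (1-\e)\frac{h-1}{h} + \frac{1-\e}{h} = 1 \le 1$, which is exactly tight, so it is fine.

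Combining, $\E[N_\e] \geq (\alpha h)^h \left(\frac{1-\e}{h}\right)^h = (\alpha(1-\e))^h$. This is off from the target by a factor of roughly $e^h/h^{3/2}$, so the crude interval argument is too lossy and must be refined: instead of collapsing each coordinate to a single width-$\frac{1-\e}{h}$ slab, I would allow $x_j$ to range over $[y_j, 1]$ and use an explicit integral. Writing the constraint region and integrating from the top coordinate down, one gets a sum over ordered tuples that can be compared to the volume of a simplex; concretely, the cleanest route is to note $I \cap D_\e \supseteq I \cap \{x_j \ge \e + (1-\e)(j-1)/h\}$ and perform the change of variables $x_j = \e + (1-\e) z_j$ mapping onto $\frac{(1-\e)^h}{h!}$ times the volume of the set of increasing $z \in [0,1]^h$ with $z_j \ge (j-1)/h$ — and the latter volume is bounded below by a constant times $h^{-1/2}$ by a ballot-type / reflection estimate (the probability that a uniform increasing sequence stays above the diagonal line $(j-1)/h$). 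Then $\E[N_\e] \ge (\alpha h)^h \cdot \frac{(1-\e)^h}{h!} \cdot c h^{-1/2}$, and Stirling's bound $h! < 3\sqrt{h}(h/e)^h$ yields $\E[N_\e] \ge \frac{c}{3} \cdot \frac{(\alpha(1-\e)e)^h}{h} \ge \frac{(\alpha(1-\e)e)^h}{3 h^{3/2}}$ for a suitable absolute constant (absorbing $c$ and adjusting the power of $h$ to be safely conservative).

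The main obstacle is the ballot-type lower bound: showing that a uniformly random increasing $h$-tuple in $[0,1]^h$ — equivalently, $h$ i.i.d.\ uniforms reordered — stays above the line $j \mapsto (j-1)/h$ with probability at least of order $h^{-1/2}$. This is a discrete analogue of the fact that a Brownian bridge stays positive with probability $\asymp h^{-1/2}$ over $h$ steps; it can be handled either by a direct reflection/cycle-lemma argument on the order statistics or by comparison with a random walk bridge. Everything else — the exact first-moment identity, the change of variables, and the Stirling cleanup — is routine. I would present the ballot estimate as a self-contained lemma and then the proposition follows by assembling the pieces above.
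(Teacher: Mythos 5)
Your reduction — linearity of expectation to get $\E[N_\e] = n^h \P(U\in I\cap D_\e)$, then the affine change of variables $x_j=\e+(1-\e)z_j$ to reduce to $\mathrm{Vol}(I\cap D_0)$, then factoring out $\P(U\in I)=1/h!$ — is exactly the paper's first step, just phrased as a volume computation rather than a conditioning on the event $C_\e$. The gap is the ``ballot-type lemma'' you invoke, which you neither prove nor state with the correct exponent. You claim that, conditionally on being increasing, the sample stays above the line $j\mapsto(j-1)/h$ with probability $\asymp h^{-1/2}$, by analogy with a walk staying positive. But the order statistics of $h$ uniforms form a \emph{bridge} (pinned near $0$ at the bottom and near $1$ at the top), and a bridge of length $h$ stays above a barrier sitting $O(1/h)$ below its mean with probability $\Theta(1/h)$, not $\Theta(h^{-1/2})$: in the Brownian bridge limit, $\P(\min B^{\mathrm{br}}\geq -\e)=1-e^{-2\e^2}\approx 2\e^2$ with $\e\asymp h^{-1/2}$ giving $\asymp 2/h$, and in the Catalan/cycle-lemma picture a length-$2n$ bridge stays nonnegative with probability $\tfrac{1}{n+1}$. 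So the order you need is $h^{-1}$; your $h^{-1/2}$ is an overclaim that happens, numerologically, to overshoot the $h^{-3/2}$ target after Stirling, but it is false and in any case unproven.

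The paper sidesteps all of this by not reasoning asymptotically at all. Its Lemma~\ref{L:Integrals} is an exact, fully elementary identity, proved by telescoping nested integrals: for $j$ uniforms, $\P(U_1\leq\cdots\leq U_j,\ U_i\geq i/(j+1)\ \forall i)=\tfrac{1}{(j+1)!}$. Then in Proposition~\ref{P:1stmom} the constraint $z_j\geq(j-1)/h$ is made to fit that lemma exactly by restricting $U_1\leq 1/h$ (cost factor $1/h$) and applying the identity to $U_2,\ldots,U_h$ with shifted indices, giving $\P(U\in I\cap D_0)\geq \tfrac{1}{h\cdot h!}$ — i.e.\ a conditional probability of exactly $1/h$, consistent with the bridge scaling above. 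One Stirling bound then finishes. If you replace your unproven $ch^{-1/2}$ lemma by the correct (and provable) $\geq 1/h$ bound — for instance via the paper's Lemma~\ref{L:Integrals} or an equivalent cycle-lemma computation — your assembly goes through and coincides with the paper's.
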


We shall need the following lemma.

\begin{lem}\label{L:Integrals}
Let $U_1, \ldots, U_j$ be i.i.d.\ $U[0,1]$ random variables. Then
\[
 \P \left( U_1 \leq \ldots \leq U_j, \ U_1 \geq \frac{1}{j + 1}, \ldots, U_j \geq \frac{j}{j + 1} \right) = \frac{1}{(j+1)!}.
\]
\end{lem}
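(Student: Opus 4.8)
The quantity to compute is
\[
 p_j = \P\left( \frac{1}{j+1} \leq U_1 \leq \ldots \leq U_j, \ U_i \geq \frac{i}{j+1} \ \forall i \right),
\]
which is the Lebesgue volume of the region $R_j \subseteq [0,1]^j$ cut out by the chain of inequalities $U_1 \leq U_2 \leq \cdots \leq U_j$ together with the floor constraints $U_i \geq i/(j+1)$. The plan is to evaluate this volume by iterated integration, integrating out the variables one at a time from $U_1$ up to $U_j$, and to identify a clean closed form for the inner integrals that makes the induction transparent.

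Concretely, I would fix $U_2 = x_2, \ldots, U_j = x_j$ and integrate over $U_1$. On $R_j$ the variable $U_1$ ranges over $[\tfrac{1}{j+1}, x_2]$, contributing a factor $x_2 - \tfrac{1}{j+1}$; substituting $y_2 = x_2 - \tfrac{1}{j+1}$ shifts the problem so that the next integration, over $U_2$, is of the form $\int (y_2 - 0)\,\text{(shifted upper limit)}$. The natural guess, which I would prove by induction on the number of variables integrated out, is that after integrating out $U_1, \ldots, U_k$ the remaining integrand (as a function of $x_{k+1}$) equals $\tfrac{1}{k!}\bigl(x_{k+1} - \tfrac{k}{j+1}\bigr)^k$, with $x_{k+1}$ constrained to $[\tfrac{k}{j+1}, x_{k+2}]$. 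The inductive step is the elementary identity $\int_{k/(j+1)}^{x_{k+2}} \tfrac{1}{k!}(t - \tfrac{k}{j+1})^k \, dt = \tfrac{1}{(k+1)!}(x_{k+2} - \tfrac{k}{j+1})^{k+1}$, after which one checks that $x_{k+2} - \tfrac{k}{j+1}$ is exactly $x_{k+2} - \tfrac{k+1}{j+1}$ shifted — i.e. the affine change of variables lines up the constraint $x_{k+2} \geq \tfrac{k+1}{j+1}$ with the new lower endpoint — so the induction hypothesis is restored at the next level. Carrying this all the way to $k = j$ leaves $\tfrac{1}{j!}\bigl(1 - \tfrac{j}{j+1}\bigr)^j = \tfrac{1}{j!}\cdot\tfrac{1}{(j+1)^j}$ from the final variable having upper limit $1$; one more factor of $\tfrac{1}{j+1}$ is not needed because the bookkeeping already absorbs it — more carefully, the last integration over $U_j \in [\tfrac{j}{j+1}, 1]$ of $\tfrac{1}{(j-1)!}(t - \tfrac{j-1}{j+1})^{j-1}$...

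Actually, the cleanest route avoids tracking the shifts by hand: substitute $V_i = \tfrac{(j+1)U_i - i}{j+1 - i}$ or, even simpler, observe that the map $U_i \mapsto U_i$ with the telescoping structure above shows $p_j = \tfrac{1}{(j+1)!}$ directly once the inductive formula is pushed to the top. I expect the main obstacle to be purely organizational: choosing a substitution (or an inductive invariant) clean enough that the affine shifts at each stage visibly telescope, rather than accumulating into a messy product of linear terms. An appealing alternative that sidesteps the computation entirely is a probabilistic bijection/symmetry argument: realize $U_1, \ldots, U_j$ together with an auxiliary $U_0 \sim U[0,1]$ as $j+1$ i.i.d. uniforms, and show that the event in the lemma has the same probability as the event that these $j+1$ variables are in increasing order — whose probability is manifestly $\tfrac{1}{(j+1)!}$ — by exhibiting a measure-preserving correspondence between the two regions of $[0,1]^{j+1}$. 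I would attempt the direct integration first, since the inductive identity is short, and fall back on the symmetry argument only if the shift bookkeeping becomes unwieldy.
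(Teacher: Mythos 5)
Your main route — iterated integration with the guessed invariant $\tfrac{1}{k!}\bigl(x_{k+1}-\tfrac{k}{j+1}\bigr)^k$ after integrating out $U_1,\ldots,U_k$ — contains a genuine error: that invariant is false, and the floor constraints do not telescope the way you hope. Already at $k=2$ (take $j=3$, so floors at $\tfrac14,\tfrac24,\tfrac34$), integrating out $v_1$ gives $v_2-\tfrac14$, and then $\int_{1/2}^{v_3}(v_2-\tfrac14)\,dv_2 = \tfrac12\bigl[(v_3-\tfrac14)^2-(\tfrac14)^2\bigr] = \tfrac12\,v_3\bigl(v_3-\tfrac12\bigr)$, which is \emph{not} $\tfrac12\bigl(v_3-\tfrac12\bigr)^2$. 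The reason the induction fails is exactly the point you gloss over: the lower limit for the $(k+1)$st integration is $\tfrac{k+1}{j+1}$, not $\tfrac{k}{j+1}$, so the antiderivative picks up a nonzero boundary term $\bigl(\tfrac{1}{j+1}\bigr)^{k+1}$, and ``shifting'' the base point of a power function does not transform $(x-\tfrac{k}{j+1})^{k+1}$ into $(x-\tfrac{k+1}{j+1})^{k+1}$. You acknowledge this (``I expect the main obstacle to be purely organizational\ldots the affine shifts\ldots accumulating into a messy product of linear terms'') but never resolve it, and you offer no argument that the mess in fact collapses.

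The paper's proof uses the same iterated-integral strategy but finds the \emph{correct} invariant, which is a two-term expression rather than a single shifted power: after integrating out $v_1,\ldots,v_{i-1}$, the integrand in $v_i$ is $\tfrac{v_i^{\,i-1}}{(i-1)!}-\tfrac{v_i^{\,i-2}}{(j+1)(i-2)!}$ (for $j=3$, $i=3$ this gives $\tfrac{v_3^2}{2}-\tfrac{v_3}{4}=\tfrac12 v_3(v_3-\tfrac12)$, matching the computation above). That form is closed under the integration step precisely because the boundary contribution from the lower limit $\tfrac{i}{j+1}$ vanishes when evaluated on this two-term combination. Finding this invariant is the content of the lemma's proof; without it your induction cannot close. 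Your fallback suggestions — the substitution $V_i=\tfrac{(j+1)U_i-i}{j+1-i}$, which garbles the ordering constraint $U_1\le\cdots\le U_j$ into an affine chain with non-unit coefficients, and the auxiliary-$U_0$ bijection, which is stated only as an aspiration with no actual measure-preserving map exhibited — are both left undeveloped and would each require substantive new work. As it stands, the proposal identifies the right shape of argument but does not contain a proof.
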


\begin{proof}
Let
\[
p = \P \left( U_1 \leq \ldots \leq U_j, \ U_1 \geq \frac{1}{j + 1}, \ldots, U_j \geq \frac{j}{j + 1} \right)
\]
and for each $i=2,\ldots,j$, define
\[
 I_i = \int_{\frac{j}{j + 1}}^1 \int_{\frac{j - 1}{j + 1}}^{v_j} \ldots \int_{\frac{i}{j + 1}}^{v_{i + 1}} \left( \frac{v_i^{i - 1}}{(i - 1)!} -  \frac{v_i^{i - 2}}{(j + 1)(i - 2)!} \right) \ dv_i \ldots dv_j.
\]
Note that
\[
 p = \int_{\frac{j}{j + 1}}^1 \int_{\frac{j - 1}{j + 1}}^{v_j} \ldots \int_{\frac{1}{j + 1}}^{v_2} 1 \ dv_1 \ldots dv_j = \int_{\frac{j}{j + 1}}^1 \int_{\frac{j - 1}{j + 1}}^{v_j} \ldots \int_{\frac{2}{j + 1}}^{v_3} \left( v_2 - \frac{1}{j + 1} \right) \ dv_2 \ldots dv_j = I_2.
\]
But for each $i=2,\ldots,j-1$,
\begin{align*}
 I_i & = \int_{\frac{j}{j + 1}}^1 \int_{\frac{j - 1}{j + 1}}^{v_j} \ldots \int_{\frac{i}{j + 1}}^{v_{i + 1}} \left( \frac{v_i^{i - 1}}{(i - 1)!} -  \frac{v_i^{i - 2}}{(j + 1)(i - 2)!} \right) \ dv_i \ldots dv_j \\
 & = \int_{\frac{j}{j + 1}}^1 \int_{\frac{j - 1}{j + 1}}^{v_j} \ldots \int_{\frac{i + 1}{j + 1}}^{v_{i + 2}} \left[ \frac{v_i^i}{i!} -  \frac{v_i^{i - 1}}{(j + 1)(i - 1)!} \right]_{\frac{i}{j + 1}}^{v_{i + 1}} \ dv_{i + 1} \ldots dv_j \\
 & = \int_{\frac{j}{j + 1}}^1 \int_{\frac{j - 1}{j + 1}}^{v_j} \ldots \int_{\frac{i + 1}{j + 1}}^{v_{i + 2}} \left( \frac{v_{i + 1}^i}{i!} -  \frac{v_{i + 1}^{i - 1}}{(j + 1)(i - 1)!} \right) \ dv_{i + 1} \ldots dv_j = I_{i + 1}.
\end{align*}
Therefore
\begin{multline*}
 p = I_2 = I_j = \int_{\frac{j}{j + 1}}^1 \left( \frac{v_j^{j - 1}}{(j - 1)!} -  \frac{v_j^{j - 2}}{(j + 1)(j - 2)!} \right) \ dv_j = \left[ \frac{v_j^j}{j!} -  \frac{v_j^{j - 1}}{(j + 1)(j - 1)!} \right]_{\frac{j}{j + 1}}^1 \\
 = \frac{1}{j!} -  \frac{1}{(j + 1)(j - 1)!} = \frac{j + 1 - j}{(j + 1)!} = \frac{1}{(j + 1)!}
\end{multline*}
as claimed.
\end{proof}

\begin{proof}[Proof of Proposition \ref{P:1stmom}]
By the fact that a $U[0,1]$ random variable conditioned to be at least $\e$ is a $U[\e,1]$ random variable,
\[
\E[N_\e] = n^h\P(U\in I\cap D_\e) = n^h \P(U\in I\cap D_\e | U\in C_\e)\P(U\in C_\e) = (\alpha h (1-\e))^h \P(U\in I\cap D_0).
\]
But by Lemma \ref{L:Integrals},
\[
\P(U\in I\cap D_0) \geq \P(U_1\leq 1/h)\P\left(U_2<\ldots<U_h, \hs U_i \geq \frac{i-1}{h} \hs \forall i=2,\ldots,h\right) = \frac{1}{h\cdot h!}.
\]
Applying Stirling's approximation once more, we obtain
\[\E[N_\e] \geq \frac{(\alpha(1-\e)e)^h}{3h^{3/2}}.\qedhere\]
\end{proof}

\section{Second moment bound}\label{S:2ndmom}
We now aim to prove an upper bound on the second moment of $N_\e$:
\begin{prop}\label{P:2ndmom}
If $\alpha(1-\e)e>1$, then there exists some constant $c > 0$ such that
\[\E[N_\e^2] \leq \E[N_\e] + \E[N_\e]^2 + c(\alpha(1-\e)e)^{2h}.\]
\end{prop}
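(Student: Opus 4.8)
The plan is to expand $\E[N_\e^2]$ by summing over ordered pairs of paths $(u,v)$ according to $k = a(u,v)$, the level at which they diverge, and to control each term carefully. Write
\[
 \E[N_\e^2] = \sum_{u,v\in P}\P(X(u)\in I\cap D_\e,\ X(v)\in I\cap D_\e) = \sum_{k=0}^{h}\ \sum_{\substack{u,v\in P\\ a(u,v)=k}}\P(X(u), X(v)\in I\cap D_\e).
\]
For $k=h$ the pair $u=v$ contributes exactly $\E[N_\e]$. For $k=0$ (paths sharing only the root) the labels of $u$ and $v$ are independent, so the contribution is at most $(\#P)^2\P(U\in I\cap D_\e)^2 = \E[N_\e]^2$; strictly speaking the number of such pairs is slightly less than $n^{2h}$, but bounding it by $\E[N_\e]^2$ is harmless. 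The work is therefore in showing that the remaining terms, $1\le k\le h-1$, contribute at most $c(\alpha(1-\e)e)^{2h}$.

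For a pair with $a(u,v)=k$, the two paths share their first $k$ vertices (labels $X_1<\dots<X_k$, which must lie in the appropriate truncated region) and then have conditionally independent continuations of length $h-k$ each. Using the same conditioning trick as in Proposition~\ref{P:1stmom} — a $U[0,1]$ variable conditioned to exceed $\e$ is $U[\e,1]$, and on $D_\e$ the $j$-th coordinate must exceed $\e + (1-\e)(j-1)/h$ — one rescales to reduce everything to the untruncated simplex. The probability that the shared prefix is increasing and lies in $D_\e$ up to level $k$, then splits into two increasing continuations each lying in the shifted region $D_\e$ on levels $k+1,\dots,h$, can be written as an integral: integrate over the value $x=X_k$ of the last shared label (which by $D_\e$ must satisfy $x\ge \e+(1-\e)(k-1)/h$, and by being the $k$-th smallest of an increasing prefix contributes a factor like $x^{k-1}/(k-1)!$ after accounting for truncation), times the square of the probability that a length-$(h-k)$ increasing sequence starting above $x$ fits in the remaining truncated region. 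The number of pairs with $a(u,v)=k$ is $n^k\cdot n^{h-k}\cdot(n-1)\cdot n^{h-k-1} \le n^{2h-k}$. Combining, the level-$k$ contribution is bounded by $n^{2h-k}$ times an integral of the above form. The key estimate will be to show that this integral decays like (something)$^{2h-k}$ in a way that makes $n^{2h-k}\times(\text{integral})$ summable against $(\alpha(1-\e)e)^{2h}$; concretely one wants the level-$k$ term to be at most $c\,r^{h-k}(\alpha(1-\e)e)^{2h}$ for some $r<1$, or at worst to contribute a geometric-type series, after which summing over $k=1,\dots,h-1$ gives the single clean bound $c(\alpha(1-\e)e)^{2h}$. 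This is exactly where the hypothesis $\alpha(1-\e)e>1$ is used: it guarantees $(\alpha(1-\e)e)^{2h}$ dominates lower-order terms like $(\alpha(1-\e)e)^h$ and that the geometric sum converges.

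The main obstacle is estimating the inner integral
\[
 J_k \defeq \int \frac{x^{k-1}}{(k-1)!}\,\Big(\text{prob.\ a length-}(h{-}k)\text{ increasing run above }x\text{ lies in the remaining truncated region}\Big)^2\,dx
\]
sharply enough. A crude bound (e.g. bounding the continuation probability by $\frac{1}{(h-k)!}$) loses too much, because one needs the interaction between the constraint $X_k=x$ being already fairly large (forced by $D_\e$) and the continuations having to climb from $x$ to near $1$ in only $h-k$ further steps — this is precisely the point of working with $D_\e$ rather than $C_\e$. I expect to bound the continuation probability by something like $\frac{(1-x)^{h-k}}{(h-k)!}$ (the probability that $h-k$ i.i.d.\ uniforms on $[x,1]$, rescaled, are increasing), substitute, and then use the inequality $\frac{x^{k-1}}{(k-1)!}\cdot\frac{(1-x)^{2(h-k)}}{((h-k)!)^2}\le$ (a Beta-function / binomial bound) to evaluate or bound $J_k$ explicitly. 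After applying Stirling to all factorials, the target reduces to checking an inequality of the form $n^{2h-k} J_k \le c\,\rho^{\,h-k}\,(\alpha(1-\e)e)^{2h}$ with $\rho<1$, which should follow from elementary calculus (maximizing the relevant exponential-in-$h$ expression over the normalized variables $k/h$ and $x$). Assembling the three pieces — $k=h$ giving $\E[N_\e]$, $k=0$ giving at most $\E[N_\e]^2$, and $1\le k\le h-1$ giving at most $c(\alpha(1-\e)e)^{2h}$ — yields the proposition.
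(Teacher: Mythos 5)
Your decomposition of $\E[N_\e^2]$ into $k$-forks, the handling of $k=0$ and $k=h$, the conditioning to reduce $D_\e$ to $D_0$, the count of $O(n^{2h-k})$ pairs at level $k$, and the representation of the remaining probability as an integral over $x = X_k$ of $\frac{x^{k-1}}{(k-1)!}\cdot\frac{(1-x)^{2(h-k)}}{((h-k)!)^2}$ all match the paper's proof exactly. The gap is in the one step that actually carries the proposition: bounding that integral. The ``Beta-function / binomial bound'' you allude to --- i.e.\ integrating $x^{k-1}(1-x)^{2(h-k)}$ over all of $[0,1]$ to get $\frac{(k-1)!\,(2h-2k)!}{(2h-k)!}$ --- is too lossy. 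A Stirling computation shows that, compared with the quantity you need, this loses a factor of order $4^{h-k}\bigl(\tfrac{h}{2h-k}\bigr)^{2h-k}$, which for $k/h$ in an interior range is exponentially large in $h$ (its exponential rate peaks around $0.075h$ near $k\approx 0.53h$). When $\alpha(1-\e)e$ is only slightly above $1$ --- which is exactly the regime the proposition must cover --- the geometric gain $(\alpha(1-\e)e)^{-k}$ cannot absorb that loss, and the bound fails. The paper's indispensable observation is that the constraint $x\ge (k-1)/h$ coming from $D_0$ lies to the \emph{right} of the mode $(k-1)/(2h-k+1)$ of $x^{k-1}(1-x)^{2(h-k)}$, so the integrand is monotone decreasing on the domain of integration and the integral is at most its value at the left endpoint $(k-1)/h$; that yields $\bigl((k-1)/h\bigr)^{k-1}\bigl((h-k+1)/h\bigr)^{2h-2k}$ divided by the same factorials, and then Stirling gives a clean $\frac{e}{8}\frac{(\alpha(1-\e)e)^{2h-k}h}{(k-1)^{1/2}(h-k+1)}$. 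You gesture at ``maximizing over $k/h$ and $x$'' but never use the lower limit of integration, which is what makes the estimate close.

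A secondary point: you predict the level-$k$ term is $\le c\,\rho^{h-k}(\alpha(1-\e)e)^{2h}$ with $\rho<1$, i.e.\ geometric decay in $h-k$. It is actually geometric in $k$: the bound $\frac{e}{8}\frac{(\alpha(1-\e)e)^{2h-k}h}{(k-1)^{1/2}(h-k+1)}$ means the $k=1,2$ terms dominate and the sum over $k$ is controlled by $(\alpha(1-\e)e)^{2h-1}\le(\alpha(1-\e)e)^{2h}$, precisely because $\alpha(1-\e)e>1$. The paper also treats $k=1$ separately (to avoid the $(k-1)^{-1/2}$ singularity); that detail is worth including.
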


\begin{proof}
We break the second moment into a sum over $k$-\emph{forks}:
\[
\begin{tikzpicture}
 \draw [-] (0,0) -- (6,0);
 \draw [-] (12,1) -- (8,1) -- (6,0) -- (8,-1) -- (12,-1);
 \draw [fill] (0,0) circle (4pt) node at (0,-0.5) {root};
 \draw [fill] (2,0) circle (4pt);
 \draw [fill] (4,0) circle (4pt);
 \draw [fill] (6,0) circle (4pt);
 \draw [decorate,decoration={brace,amplitude=10}] (6,-0.25) -- (2,-0.25) node at (4,-1) {$k$ vertices};
 \draw [fill] (8,-1) circle (4pt);
 \draw [fill] (10,-1) circle (4pt);
 \draw [fill] (12,-1) circle (4pt);
 \draw [fill] (8,1) circle (4pt);
 \draw [fill] (10,1) circle (4pt);
 \draw [fill] (12,1) circle (4pt);
 \draw [decorate,decoration={brace,amplitude=10}] (12,0.75) -- (8,0.75) node at (10,0) {$h - k$ vertices};
 \draw [decorate,decoration={brace,amplitude=10}] (8,-0.75) -- (12,-0.75);
\end{tikzpicture}
\]
To this end, for $k=0,\ldots,h$, let
\[N_\e^2(k) = \sum_{\substack{u,v\in P:\\ a(u,v)=k}}\1_{\{X(u),X(v)\in I\cap D_\e\}}.\]
Then
\[N_\e^2 = \sum_{k=0}^h N_\e^2(k).\]
Clearly $N_\e^2(h) = N_\e$, and $\E[N_\e^2(0)] = \E[N_\e]^2$.

Let $U = (U_1,\ldots,U_h)$ and $V=(V_1,\ldots,V_h)$ each be a sequence of i.i.d.\ $U[0,1]$ random variables such that $U_j = V_j$ for all $j\leq k$ and $U_j$ and $V_j$ are independent for $j>k$. Using the fact that a uniform $[0,1]$ random variable conditioned to have value at least $\e$ is a uniform $[\e,1]$ random variable, we have for $k=2,\ldots,h-1$,
\begin{align*}
\E[N_\e^2(k)] &= n^k\cdot n(n-1)\cdot n^{2h-2k-2}\cdot\P(U,V\in I\cap D_\e)\\
&= \left(\frac{n-1}{n}\right) n^{2h-k}\P(U,V\in I\cap D_\e | U,V\in C_\e)\P(U,V\in C_\e)\\
&= \left(\frac{n-1}{n}\right) (\alpha h)^{2h-k}(1-\e)^{2h-k} \P(U,V\in I\cap D_0).
\end{align*}
Now,
\begin{align*}
\P(U,V\in I\cap D_0) &= \int_{\frac{k - 1}{h}}^1 \P(U,V \in I\cap D_0 | U_k = x) \ dx\\
&\leq \int_{\frac{k - 1}{h}}^1 \P(U_1<U_2<\ldots<U_{k-1}<x)\P(x<U_{k+1}<U_{k+2}<\ldots<U_h)^2 \ dx\\
&= \int_{\frac{k - 1}{h}}^1 \frac{x^{k-1}}{(k-1)!}\cdot \frac{(1-x)^{2h-2k}}{(h-k)!^2} \ dx.
\end{align*}
The curve $x^{k-1}(1-x)^{2h-2k}$ is decreasing on $x>(k-1)/(2h-k+1)$, so since $(k-1)/(2h-k+1) < (k-1)/h$,
\[
\int_{\frac{k - 1}{h}}^1 \frac{x^{k-1}}{(k-1)!}\cdot \frac{(1-x)^{2h-2k}}{(h-k)!^2} \ dx \leq \frac{((k-1)/h)^{k-1}}{(k-1)!}\cdot \frac{((h-k+1)/h)^{2h-2k}}{(h-k)!^2}.
\]
Putting these estimates together and then applying Stirling's approximation, we obtain that for $k=2,\ldots,h-1$,
\begin{align*}
\E[N_\e^2(k)] &\leq (\alpha h(1-\e))^{2h-k} \cdot \frac{((k-1)/h)^{k-1}}{(k-1)!}\cdot\frac{((h-k+1)/h)^{2h-2k}}{(h-k)!^2}\\
&\leq (\alpha(1-\e))^{2h-k} h \cdot \frac{e^{k-1}}{2(k-1)^{1/2}} \cdot \frac{e^{2h-2k+2}}{4(h-k+1)}\\
& = \frac{e}{8} \cdot \frac{(\alpha(1-\e)e)^{2h-k}h}{(k-1)^{1/2}(h-k+1)}.
\end{align*}

Similarly,
\begin{align*}
\E[N_\e^2(1)] &\leq n^{2h-1}\frac{(1-\e)^{2h-1}}{(h-1)!^2}\\
&\leq \frac{e}{4} (\alpha(1-\e)e)^{2h-1}.
\end{align*}

Thus if $\alpha (1 - \e)e > 1$, for some constant $c$,
\begin{align*}
\E[N_\e^2] &\leq \E[N_\e] + \E[N_\e]^2 + \frac{e}{4} (\alpha(1-\e)e)^{2h-1} + \sum_{k=2}^{h-1} \frac{e}{8} \cdot \frac{(\alpha(1-\e)e)^{2h-k}h}{(k-1)^{1/2}(h-k+1)}\\
&\leq \E[N_\e] + \E[N_\e]^2 + c(\alpha(1-\e)e)^{2h}.\qedhere
\end{align*}
\end{proof}

\section{Proof of Theorem \ref{T:mainthm}}\label{S:mainthm}
As noted previously, it suffices to prove a lower bound when $\alpha>1/e$. Choose $\e\in(0,1)$ such that $\alpha(1-\e)e>1$. By the Paley-Zygmund inequality,
\[\P\left(N_\e \geq \frac{\E[N_\e]}{2}\right) \geq \frac{\E[N_\e]^2}{4\E[N_\e^2]}.\]
By Proposition \ref{P:1stmom}, if we choose $\delta\in(0,\log(\alpha(1-\e)e))$ then $\E[N_\e]/2 \geq e^{\delta h}$ for all large $h$, so
\[\P\left(N_\e > \exp(\delta h)\right) \geq \frac{\E[N_\e]^2}{4\E[N_\e^2]}.\]
But by Propositions \ref{P:1stmom} and \ref{P:2ndmom}, for large $h$ and some constant $c'$,
\[\E[N_\e^2] \leq c'h^3 \E[N_\e]^2.\]
Thus we get
\begin{equation}\label{E:estim}
\P(N_\e > \exp(\delta h)) \geq \frac{1}{4c'h^3}.
\end{equation}
Of course, we now want to improve this bound to get something exponentially close to $1$ on the right-hand side. To do this, we will consider the first four levels of the tree separately from the rest. The idea is that with high probability, there are $\sim n^4$ paths from the root of length $4$ whose labels are increasing and $< \e$. Each vertex at level $4$ then has a subtree of $(h-4)^n$ paths of length $h-4$ and with probability $\gtrsim h^{-3}$ lots of these subpaths have labels which are increasing and $> \e$, by (\ref{E:estim}). So the probability that no path is increasing should look like, up to constants, $(1-h^{-3})^{n^4}$, which decays exponentially as desired. We note that our choice of four levels is only to counteract the factor of $h^{-3}$ in (\ref{E:estim}) and working with any finite number of levels greater than $3$ would also suffice.

We start by considering the subpaths $v$ from the root to level $4$. Although one can count subpaths whose labels satisfy $X(v_1) < \ldots < X(v_4) < \e$, we will instead count subpaths whose labels lie inside a priori intervals, allowing us to consider levels one at a time. More precisely, for $j \leq 4$, let $M_j$ be the set of subpaths $v$ from the root to level $j$ such that $X(v_i) \in [(i-1)\e/4,i\e/4)$ for each $i=1,\ldots,j$. Observe that $\# M_1$ is the sum of $n$ independent Bernoulli random variables of parameter $\e / 4$; similarly, for $2 \leq j \leq 4$, given $\# M_{j-1}\geq k$, $\# M_j$ is at least a sum of $kn$ independent Bernoulli random variables of parameter $\e / 4$. For this reason, the following well-known form of the Chernoff bound will be useful.

\begin{lem}\label{L:Chernoff}
Let $Z_1, \ldots, Z_r$ be independent Bernoulli random variables and let $Z = \sum_{i=1}^r Z_i$. Then
\[
 \P \left( Z \leq \frac{\E[Z]}{2} \right)\leq \exp \left( -\frac{\E[Z]}{8} \right).
\]
\end{lem}

We can now prove our desired bound on $\# M_4$.

\begin{lem}\label{L:Level4}
\[
 \P(\# M_4 \leq (n \e/8)^4) \leq 4\exp(-n\e^4 / 16384).
\]
\end{lem}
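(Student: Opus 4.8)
The plan is to iterate the Chernoff bound (Lemma \ref{L:Chernoff}) over the first four levels of the tree, using the nested structure of the sets $M_j$. The key observation, already noted in the setup, is that $\#M_1$ is a sum of $n$ independent Bernoulli($\e/4$) variables, so $\E[\#M_1] = n\e/4$; and conditionally on $\#M_{j-1} \geq k$, the count $\#M_j$ stochastically dominates a sum of $kn$ independent Bernoulli($\e/4$) variables, since each of the $k$ surviving subpaths independently has $n$ children whose labels fall in $[(j-1)\e/4, j\e/4)$ with probability $\e/4$ each, independently of the labels further up (the labels are all i.i.d.).

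First I would apply Lemma \ref{L:Chernoff} to $\#M_1$: with $\E[\#M_1] = n\e/4$, we get
\[
\P\Bigl(\#M_1 \leq \tfrac{n\e}{8}\Bigr) \leq \exp\Bigl(-\tfrac{n\e}{32}\Bigr).
\]
Then, working level by level, I would show by induction that on the event $\{\#M_{j-1} \geq (n\e/8)^{j-1}\}$, the conditional distribution of $\#M_j$ dominates a Binomial$\bigl((n\e/8)^{j-1}\cdot n,\ \e/4\bigr)$, whose mean is $\tfrac{\e}{4}\cdot n \cdot (n\e/8)^{j-1} \geq 2 (n\e/8)^{j}$; applying Lemma \ref{L:Chernoff} again bounds the conditional probability that $\#M_j \leq (n\e/8)^j$ by $\exp\bigl(-\tfrac14 (n\e/8)^{j}\bigr) \leq \exp(-n\e/32)$ (using $(n\e/8)^j \geq n\e/8$, valid once $n\e \geq 8$; for small $n$ the stated bound is trivial since the probability is at most $1$ and $4\exp(-n\e^4/16384) \geq 1$ need not hold, but in the asymptotic regime $n = \alpha h \to \infty$ this is not an issue — in any case the crude constant $16384 = 4 \cdot 4096$ absorbs the slack). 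Then a union bound over the four ``bad'' events $\{\#M_j \leq (n\e/8)^j\}$ for $j=1,2,3,4$, each of probability at most $\exp(-n\e^4/16384)$ after bounding all the per-level exponents below by the smallest one, gives
\[
\P\bigl(\#M_4 \leq (n\e/8)^4\bigr) \leq 4\exp(-n\e^4/16384).
\]

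The main obstacle is making the conditional stochastic domination rigorous: one must argue that, conditionally on the identity of the subpaths counted in $M_{j-1}$ (not merely on their number), the labels at level $j$ on the children of those subpaths are fresh i.i.d. $U[0,1]$ variables, so that $\#M_j$ restricted to descendants of $M_{j-1}$ is genuinely a sum of independent Bernoulli($\e/4$) variables — and then one passes from conditioning on the exact configuration to conditioning on the event $\{\#M_{j-1}\geq k\}$ by a standard coupling/monotonicity argument. The only other care needed is bookkeeping the constants so that all four exponents $-\tfrac14(n\e/8)^j$ and $-n\e/32$ are dominated by $-n\e^4/16384$; since $\e \in (0,1)$, we have $\e^4 \leq \e$, and $16384$ comfortably exceeds $32$ and $32$, so all four terms are at most $\exp(-n\e^4/16384)$, completing the union bound.
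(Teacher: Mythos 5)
Your proposal is correct and takes essentially the same approach as the paper: iterate the Chernoff bound (Lemma~\ref{L:Chernoff}) level by level using the conditional stochastic domination of $\#M_j$ by a binomial given $\#M_{j-1}$, then take a union bound over the four bad events. The only difference is a small bookkeeping detour (you bound each per-level exponent by $n\e/32$ first, which needs $n\e\geq 8$; the paper implicitly compares each term directly to $n\e^4/16384$, which works for all $n\geq 1$), and your worry about the small-$n$ regime is moot since $n\e<8$ already forces $4\exp(-n\e^4/16384)>1$.
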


\begin{proof}
At level 1, there are $n$ vertices, and $\E[\# M_1] = n \e /4$. Thus by Lemma \ref{L:Chernoff},
\[
 \P\left(\# M_1 \leq n \e/8 \right) \leq \exp \left( - \frac{n \e}{4\cdot8} \right).
\]
At level 2, given that $\# M_1 > n \e/8$, there are at least $n \e^2/8$ vertices whose parent had label in $[0,\e/4)$, and so
\[
 \E[\# M_2 \ | \ \# M_1 > n \e/8] \geq \frac{n^2 \e^2}{8 \cdot 4}.
\]
Again by Lemma \ref{L:Chernoff},
\[
 \P\left(\left.\# M_2 \leq (n \e/8)^2 \hsl\right|\hsl \# M_1 > n \e/8\right) \leq \exp \left( -\frac{(n \e/8)^2}{4} \right).
\]
Similarly,
\[
 \P\left(\left.\# M_3 \leq (n \e/8)^3 \hsl\right|\hsl \# M_2 > (n \e/8)^2 \right) \leq \exp \left( -\frac{(n \e/8)^3}{4} \right).
\]
and
\[
 \P\left(\left.\# M_4 \leq (n \e/8)^4 \hsl\right|\hsl \# M_3 > (n \e/8)^3 \right) \leq \exp \left( -\frac{(n \e/8)^4}{4} \right).
\]
Summing these estimates gives the result.
\end{proof}

To complete the proof of Theorem \ref{T:mainthm}, note that
\[\P(N \leq \exp(\delta h)) \leq \P(\# M_4 \leq (n \e/8)^4 ) + \P(N \leq \exp(\delta h), \hsl \# M_4 > (n \e/8)^4).\]
Suppose that $u\in M_4$, and consider the subtree of height $h-4$ rooted at the vertex $u_4$. In order that $N\leq e^{\delta h}$, it must hold that there are no more than $e^{\delta h}$ paths in this subtree that have labels ordered and greater than $\e$. But we know from (\ref{E:estim}), since $n/(h-4) \geq n/h = \alpha$, that the probability of this event is at most $1-c'h^{-3}$. Thus, applying also Lemma \ref{L:Level4} and the inequality $1+x \leq e^{x}$,
\[\P(N \leq \exp(\delta h)) \leq 4\exp(-n \e^4 / 16384) + (1-c'h^{-3})^{(n \e/8)^4} \leq  \exp(-\eta h)\]
for some $\eta > 0$, which proves Theorem \ref{T:mainthm}.

\section{Extension to $\alpha = 1/e + o(1)$: proof of Theorem \ref{T:ext}}\label{S:ext}
We now turn our attention to the case when $n = \alpha_h h$ where $\alpha_h = (1+\beta_h)/e$, $\beta_h \to 0$. For the first part of the theorem, it is not difficult to see from (\ref{E:stirl}) and Markov's inequality that if $\log h - 2h\beta_h \to\infty$, then the probability that there exists an increasing path tends to $0$ as $h\to\infty$. For the second part, choose $\e_h$ such that $\e_h/\beta_h\to 0$ but $h\e_h/\log h\to\infty$ as $h\to\infty$. Then
\[
 \alpha_h (1-\e_h) e = (1+\beta_h)(1-\e_h) = 1+\beta_h - \e_h - \beta_h \e_h.
\]
So, for $h$ sufficiently large, we have $\alpha_h (1-\e_h) e > 1$ and the proofs of Propositions \ref{P:1stmom} and \ref{P:2ndmom} go through almost unchanged. As before we get
\[\E[N_{\e_h}] \geq \frac{(\alpha_h(1-\e_h)e)^h}{3h^{3/2}}\to\infty\]
and
\[
 \E[N_{\e_h}^2] \leq \E[N_{\e_h}] + \E[N_{\e_h}]^2 + \frac{e}{4} (\alpha_h(1-\e_h)e)^{2h-1} + \sum_{k=2}^{h-1} \frac{e}{8} \cdot \frac{(\alpha_h(1-\e_h)e)^{2h-k}h}{(k-1)^{1/2}(h-k+1)}.
\]
However, since $(\alpha_h(1-\e_h)e)$ is not constant, we cannot bound the last term, up to constants, by $(\alpha_h(1-\e_h)e)^{2h}$ as before. Instead, we see that
\[
 \sum_{k=2}^{h-1} \frac{1}{(k-1)^{1/2}(h-k+1)} < \int_0^{h-1} \frac{1}{\sqrt x (h-x)} \, dx = \frac{2}{\sqrt h} \tanh^{-1} \left( \sqrt{\frac{h - 1}{h}} \right) \sim \frac{\log h}{\sqrt h},
\]
which leads to a bound, up to constants, of $h^{1/2}\log h (\alpha_h(1 - \e_h)e)^{2h}$. Therefore,
\[
 \E[N_{\e_h}^2] \leq \E[N_{\e_h}] + \E[N_{\e_h}]^2 + ch^{1/2}\log h(\alpha_h(1 - \e_h)e)^{2h} \leq c' h^{15/4}\E[N_{\e_h}]^2
\]
for some constants $c$ and $c'$.

Now, the main difficulty arises in our application of Lemma \ref{L:Level4}. With the new exponent of $15/4$, applying our previous argument, we get
\[
 \P(N \leq \exp(\delta h), \hsl \# M_4 > (n \e_h/8)^4) \leq (1 -c'h^{-15/4})^{(n \e_h/8)^4} \leq \exp(-c''h^{1/4}\e_h^4)
\]
for some constant $c''$. However, unlike before, this probability does not converge to $0$ as $h \to \infty$, because $h^{1/4}\e_h^4$ does not necessarily converge to infinity. So instead of working with a fixed number of levels, we work with $\log h$ levels. More precisely, we work with $\lfloor \log h\rfloor$ levels, but to save notation we simply write $\log h$.

Now, for $j=1,\ldots,\log h$, we define $\tld M_j$ to be the set of subpaths $v$ from the root to level $j$ such that $X(v_i) \in [(i-1)\e_h/\log h,i\e_h/\log h)$ for each $i=1,\ldots,j$. As before, by repeatedly applying Lemma \ref{L:Chernoff} we get
\[
 \P\left( \left. \# \tld M_j \leq \left(\frac{n \e_h}{2\log h}\right)^j \hsl\right|\hsl \# \tld M_{j - 1} > \left(\frac{n \e_h}{2\log h}\right)^{j - 1} \right) \leq \exp \left(- \frac14 \left(\frac{n \e_h}{2 \log h}\right)^j\right),
\]
for $j = 1, \ldots, \log h$. Summing these bounds gives
\[
 \P\left( \# \tld M_{\log h} \leq \left(\frac{n \e_h}{2\log h}\right)^{\log h} \right) \leq \sum_{j = 1}^{\log h} \exp \left(- \frac14 \left(\frac{n \e_h}{2 \log h}\right)^j  \right)
\]
which converges to $0$ as $h \to \infty$ by our assumption that $h\e_h/\log h \to \infty$. Therefore, with high probability, we have at least $(n \e_h / 2\log h)^{\log h} = h^{\log(n \e_h / 2\log h)}$ ``good'' increasing subpaths up to level $\log h$, each of which has probability at least $c'h^{-15/4}$ of extending to an increasing path to a leaf. Since
\[
 \displaystyle (1 - c'h^{-15/4})^{(n \e_h / 2\log h)^{\log h}} \leq \exp \left(- c'h^{-15/4 + \log(n \e_h / 2\log h)}\right) \to 0,
\]
the probability that there is no increasing path from the root to a leaf converges to $0$ as $h \to \infty$, completing the proof of Theorem \ref{T:ext}.

\paragraph{Acknowledgements.}

The first author is partly supported by EPSRC grant EP/K007440/1. This collaboration began during the 2013 UK probability meeting in Cambridge supported by EPSRC grant EP/I03372X/1 and continued at the Mathematisches Forschungsinstitut Oberwolfach supported by the ESF and Oberwolfach Leibniz Graduate Student programme. Both authors thank two anonymous referees for their helpful comments.


\end{document}